\documentclass[12pt,a4paper,oneside]{amsart}
\usepackage{color,amssymb,latexsym,amsfonts,textcomp,fancyhdr,calc,graphicx}
\usepackage{amsmath,amstext,amsthm,amssymb,amsxtra}
\usepackage[left=3cm,right=3cm,bottom=3cm,top=3cm]{geometry} 
\usepackage{hyperref}
\setlength{\parskip}{1em}

\usepackage{txfonts,pxfonts,tikz} 
\usepackage[T1]{fontenc}
\usepackage{units}
\usepackage[backend=bibtex,style=alphabetic,citestyle=alphabetic]{biblatex}
\addbibresource{large.bib}

\newtheorem{theorem}{Theorem}
\newtheorem{corollary}[theorem]{Corollary}
\newtheorem{lemma}{Lemma}

\newtheorem{definition}{Definition}[section]
\theoremstyle{definition}

\newcommand{\beql}[1]{\begin{equation}\label{#1}}
\newcommand{\eeq}{\end{equation}}
\newcommand{\comment}[1]{}

\newcommand{\Abs}[1]{{\left|{#1}\right|}}

\newcommand{\Set}[1]{{\left\{{#1}\right\}}}

\newcommand{\RR}{{\mathbb R}}

\newcommand{\ZZ}{{\mathbb Z}}

\newcommand{\inner}[2]{{\langle #1, #2 \rangle}}

\newcommand{\dens}{{\rm dens\,}}
\newcommand{\udens}{\overline{{\rm dens\,}}}

\newcommand{\ft}[1]{\widehat{#1}}

\newcounter{rem}
\setcounter{rem}{0}

\setlength{\marginparwidth}{1.7in}

\newcounter{step}
\setcounter{step}{0}


\newcounter{mysec}
\setcounter{mysec}{-1}

\newcounter{mysubsec}[mysec]
\setcounter{mysubsec}{-1}

\newcounter{othm}
\setcounter{othm}{0}
\def\theothm{\Alph{othm}}


\begin{document}

\title{Packing near the tiling density and exponential bases for product domains}
\author{Mihail N. Kolountzakis}
\thanks{This work has been partially supported by the ``Aristeia II'' action (Project
FOURIERDIG) of the operational program Education and Lifelong Learning
and is co-funded by the European Social Fund and Greek national resources.}
\address{Department of Mathematics and Applied Mathematics, University of Crete, Voutes Campus, GR-700 13, Heraklion, Crete, Greece}
\email{kolount@gmail.com}

\begin{abstract}
A set $\Omega$ in a locally compact abelian group is called spectral if $L^2(\Omega)$ has an orthogonal basis of
group characters.
An important problem, connected with the so-called Spectral Set Conjecture (saying that $\Omega$ is spectral if and only if
a collection of translates of $\Omega$ can partition the group), is the question
of whether the spectrality of a product set $\Omega = A \times B$, in a product group, implies the spectrality of
the factors $A$ and $B$.
Recently Greenfeld and Lev proved that if $I$ is an interval and $\Omega \subseteq \RR^d$ then the spectrality
of $I \times \Omega$ implies the spectrality of $\Omega$.
We give a different proof of this fact by first proving a result about packings of high density implying the existence
of tilings by translates of a function.
This allows us to improve the result to a wider collection of product sets than those dealt with by Greenfeld and Lev.
For instance when $A$ is a union of two intervals in $\RR$ then we show that the spectrality of $A \times \Omega$ implies
the spectrality of both $A$ and $\Omega$.
\end{abstract}

\maketitle

\noindent{\bf Keywords:} Spectral sets; Fuglede's Conjecture; Density of packings; Tilings.

\noindent{\bf AMS Primary Classification:} 42C99, 52C22

\tableofcontents

\section{Introduction}
\label{sec:intro}

\subsection{A review of the Fuglede problem on spectral sets and tiles}
\label{sec:fuglede-review}

Let $\Omega \subseteq \RR^d$ be a bounded measurable set.
The concept of a spectrum of $\Omega$ that we deal with in this paper
was introduced by Fuglede \cite{fuglede1974operators}. 

\begin{definition}
A set $\Lambda \subseteq \RR^d$
is called a {\em spectrum} of $\Omega \subseteq \RR^d$ (and $\Omega$ is said to be a {\em spectral set})
if the set of exponentials
$$
E(\Lambda) = \Set{e_\lambda(x)=e^{2\pi i \lambda\cdot x}:\ \lambda\in\Lambda}
$$
is a complete orthogonal set in $L^2(\Omega)$
under the inner product $\inner{f}{g} = \int_\Omega f \overline{g}$.
\end{definition}

At least for bounded $\Omega$
it is easy to see (see, for instance, \cite{kolountzakis2004milano}) that the orthogonality of $E(\Lambda)$
is equivalent to the {\em packing condition}
\beql{packing-condition}
\sum_{\lambda\in\Lambda}\Abs{\ft{\chi_\Omega}}^2(x-\lambda) \le \Abs{\Omega}^2,\ \ \mbox{a.e. ($x$)},
\eeq
as well as to the condition
\beql{zeros-condition}
\Lambda-\Lambda \subseteq \Set{0} \cup \Set{\ft{\chi_\Omega}=0}.
\eeq
Here $\chi_\Omega$ is the indicator function of $\Omega$.

The orthogonality and completeness of $E(\Lambda)$ is in turn equivalent to the {\em tiling condition}
\beql{tiling-condition}
\sum_{\lambda\in\Lambda}\Abs{\ft{\chi_\Omega}}^2(x-\lambda) = \Abs{\Omega}^2,\ \ \mbox{a.e. ($x$)}.
\eeq
These equivalent conditions follow from the identity
$$
\inner{e_\lambda}{e_\mu} = \int_\Omega e_\lambda \overline{e_\mu} = \ft{\chi_\Omega}(\mu-\lambda)
$$
and from the density of trigonometric polynomials in $L^2(\Omega)$.
Condition \eqref{packing-condition} is roughly expressing the validity of Bessel's inequality for the
system of exponentials $E(\Lambda)$ while condition \eqref{tiling-condition} says that Bessel's inequality
holds as equality.

If $\Lambda$ is a spectrum of $\Omega$ then so is any translate of $\Lambda$ but there may be other spectra as well.

{\em Example:} If $Q_1 = (-1/2, 1/2)^d$ is the cube of
unit volume in $\RR^d$ then
$\ZZ^d$ is a spectrum of $Q_1$.
Let us remark here that
there are spectra of $Q_1$ which are very different from translates of the lattice $\ZZ^d$
\cite{iosevich1998spectral,lagarias2000orthonormal,kolountzakis2000packing}.

Research on spectral sets
\cite{lagarias1997spectral,laba2002spectral,laba2001twointervals}
has been influenced for many years by a conjecture of Fuglede
\cite{fuglede1974operators},
sometimes called the {\em Spectral Set Conjecture},
which stated that a set $\Omega$ is spectral if and only
if it tiles by translation. A set $\Omega$ tiles by translation (or just tiles, for this paper) if
we can translate copies of $\Omega$ around and fill space without overlaps.
More precisely there exists a set $S \subseteq \RR^d$ such that
\beql{tiling}
\sum_{s\in S} \chi_\Omega(x-s) = 1,\ \ \mbox{a.e. ($x$)}.
\eeq

One can generalize naturally the notion of translational tiling from sets to functions
by saying that a nonnegative $f \in L^1(\RR^d)$ tiles when translated at the locations $S$
if $\sum_{s\in S} f(x-s) = \ell$ for almost every $x\in\RR^d$
(the constant $\ell$ is called the {\em level} of the tiling).
Thus the question of spectrality for a set $\Omega$ is essentially a tiling question
for the function $\Abs{\ft{\chi_\Omega}}^2$.
Because of the equivalent condition \eqref{tiling-condition}
one can now restate the Fuglede Conjecture as the equivalence
(all tilings are by translation only in this paper)
\beql{fuglede-conjecture}
\Omega \mbox{ tiles $\RR^d$ at level 1} \Longleftrightarrow
\Abs{\ft{\chi_\Omega}}^2 \mbox{ tiles $\RR^d$ at level $\Abs{\Omega}^2$}.
\eeq
The equivalence \eqref{fuglede-conjecture} is known, from the time of Fuglede's
paper \cite{fuglede1974operators}, to be true if one adds the word {\em lattice} to both sides
(that is, lattice tiles are the same as sets with a lattice spectrum and the dual of any tiling lattice
is a spectrum).

The full conjecture \eqref{fuglede-conjecture} is, however, now known to be false in both directions if $d\ge 3$
\cite{tao2004fuglede,matolcsi2005fuglede4dim,kolountzakis2006hadamard,kolountzakis2006tiles,farkas2006onfuglede,farkas2006tiles},
but remains open in dimensions $1$ and $2$ and it is not out of the question
that the conjecture is true in all dimensions if one restricts the domain $\Omega$ to be convex.

It is known
that the direction ``tiling $\Rightarrow$ spectrality'' is true in the case of convex domains;
see for instance \cite{kolountzakis2004milano}.
In the direction ``spectrality $\Rightarrow$ tiling'' it was proved in \cite{iosevich2003fuglede}
that in $\RR^2$ every spectral convex domain must be a polygon and also tiles the plane (this
restricts the polygon to be either a parallelogram or a symmetric hexagon).
In a major recent result Greenfeld and Lev \cite{greenfeld2016fuglede} proved 
that any convex polytope in $\RR^3$ which is spectral must have symmetric facets (a property that also holds for convex polytopes that
tile) and, furthermore, it admits tilings by translation. This makes the validity of Fuglede's conjecture for convex domains in $\RR^3$
very close to being proved (it has long been known \cite{iosevich2001convexbodies} that convex bodies in $\RR^d$ with a point of
curvature are not spectral, and all that's missing is a proof that any spectral convex domain in $\RR^3$ is necessarily a polytope). 

\subsection{Tiling and spectrality for products and factors}
\label{sec:products}
To prove the result in \cite{greenfeld2016fuglede} Greenfeld and Lev first proved \cite{greenfeld2016spectrality} that
if $I \subseteq \RR$ is an interval and $A \subseteq \RR^d$ is such that $I \times A \subseteq \RR^{d+1}$ is spectral
then the set $A$ must itself be spectral.
Our main result in this paper is the extension of this result of Greenfeld and Lev to the case where $I$ is the union of two intervals
(see Corollary \ref{cor:two-intervals}, which comes from the more general Theorem \ref{th:cylinders}).

Our method is different from that followed in \cite{greenfeld2016spectrality}.
Instead of making a series of modifications to the spectrum of $I \times A$ (as in \cite{greenfeld2016spectrality}) that bring the
spectrum to a form that enables one
to read a spectrum of $A$ from the modified spectrum of $I \times A$, we are basing our approach on Theorem \ref{th:main} which
roughly says that if one can achieve packings of an object with density arbitrarily close to the tiling density then the object tiles.
This is a natural statement which is not hard to prove (but requires some care).

It is easy to see that whenever $A \times B$ tiles $\RR^m \times \RR^n$ by translation then $A$ tiles $\RR^m$ and $B$ tiles $\RR^n$.
Indeed, assume that $\sum_{s \in S} \chi_{A \times B}((x,y)-s) = 1$ for almost all $(x,y) \in \RR^{m+n}$.
By Fubini's theorem there is $x \in \RR^m$ such that the above function is 1 for almost all $y \in \RR^n$.
This means exactly that the function $\chi_B$ tiles $\RR^n$ when translated at the locations
$$
\pi_2 \Set{(s_1, s_2) \in S: x-s_1 \in A},
$$
where $\pi_2(x,y) = y$.
More intuitively, if a product set $A \times B$ tiles space $\RR^m\times\RR^n$ then almost every translate of $\Set{0}\times\RR^n$
is tiled by copies (translates) of $B$.

It is also very easy to see that if $A$ and $B$ are tiles then so is $A \times B$ and if $A$ and $B$ are spectral
then so is $A \times B$.

Thus the (still unknown) implication
\beql{product-to-factors}
A \times B \ \ \text{spectral} \Rightarrow A \ \ \text{spectral and } B \ \ \text{spectral}
\eeq
is very important for the Fuglede conjecture.
For if we suppose the ``spectral $\Rightarrow$ tiling'' half of the Fuglede conjecture to be true
in $\RR^{m+n}$ and the ``tiling $\Rightarrow$ spectral'' half to be true in $\RR^m$ and in $\RR^n$
then it follows that if $A \times B \subseteq \RR^m\times\RR^n$ is spectral then so
are $A\subseteq\RR^m$ and $B\subseteq\RR^n$.

Thus if one finds a counterexample to \eqref{product-to-factors} in dimensions $m=n=1$
this will imply the failure of the ``spectral $\Rightarrow$ tiling'' half in $\RR^2$ {\em or}
the ``tiling $\Rightarrow$ spectral'' half in $\RR$, without distinguishing which one fails.
But in any case this would imply that the Fuglede conjecture
(as the conjunction of the two implications ``spectral $\Rightarrow$ tiling'' and ``tiling $\Rightarrow$ spectral'')
fails in $\RR^2$.
The importance of finding out if \eqref{product-to-factors} holds is evident.
The results of this paper and the result in \cite{greenfeld2016spectrality} may be viewed as proof of \eqref{product-to-factors}
under extra assumptions on one of the factors.

\subsection{Notation and some definitions.}
\label{sec:notations}

We write
$$
Q_R = [-R/2, R/2]^d
$$
for the $0$-centered cube of side length $R$.

If $\Lambda \subseteq \RR^d$ is a discrete set then we write
$$
\delta_\Lambda = \sum_{\lambda \in \Lambda} \delta_\lambda
$$
for the locally finite measure that consists of a unit point mass at each point of $\Lambda$.
With this notation we can write
$$
\sum_{\lambda \in \Lambda} f(x-\lambda) = f*\delta_\Lambda(x).
$$

If $f \ge 0$ is a function (often an indicator function) and $\Lambda$ is a set in $\RR^d$ we say
that $f$ packs with $\Lambda$ at level $\ell>0$ if
\beql{packing}
\sum_{\lambda \in \Lambda} f(x-\lambda) \le \ell,\ \ \ \ (\text{for almost every $x \in \RR^d$}).
\eeq
If \eqref{packing} holds almost everywhere as equality we say that $f$ tiles with $\Lambda$ at level $\ell$.
Often we say that $f+\Lambda$ is a packing or a tiling to denote this situation.

Whenever we speak of packing or tiling without mentioning the level of the packing or the tiling we imply that the level is 1.

Our definition of density and upper density of a (usually discrete) set in $\RR^d$ is the usual asymptotic, translation-invariant one.
The {\em upper density} of a set $\Lambda$ is the quantity
$$
\limsup_{R \to \infty} \sup_{x \in \RR^d} \frac{\Abs{\Lambda \cap \big(x+[-R/2, R/2]^d\big)}}{R^d},
$$
(here $\Abs{\cdot}$ denotes cardinality)
with the corresponding $\liminf$ being the {\em lower density}. If the upper and lower density are equal then we call this
the {\em density} of the set.

A set $\Lambda \subseteq \RR^d$ is called {\em uniformly discrete} if there is $\delta>0$ such that
$\Abs{\lambda_1 - \lambda_2} > \delta$ whenever $\lambda_1, \lambda_2 \in \Lambda$ are different.

Following \cite{greenfeld2016spectrality} we define the {\em weak convergence} of the sets $\Lambda^n \subseteq \RR^d$
to the set $\Lambda \subseteq \RR^d$. If there is $\delta>0$ which is a seperating constant for all the $\Lambda^n$ and $\Lambda$
then we say that the $\Lambda^n$ converge weakly to $\Lambda$ if
for every $\epsilon, R > 0$ there is $N$ such that for all $n \ge N$ we have
$$
\Lambda^n \cap Q_R \subseteq \Lambda+Q_\epsilon\ \ \text{and}\ \ \Lambda \cap Q_R \subseteq \Lambda^n+Q_\epsilon.
$$

If $\lambda = (x,y) \in A\times B$ we write $x = \pi_1\lambda \in A$ and $b = \pi_2\lambda \in B$.

\section{The right packing density guarantees the existence of tilings}
\label{sec:tilings-from-densit}

Our main result for this section roughly says that if an object can pack space arbitrarily close to tiling level
then it can actually tile space exactly. This is essentially a compactness phenomenon.
\begin{theorem}\label{th:main}
If $f$ satisfies
\beql{assumptions}
f \in L^1(\RR^d),\ \  f \ge 0,\ \  \int f = 1,\ \  f>1/2 \ \ \text{on a set of positive measure},
\eeq
and has packings $f+\Lambda$ of upper density $\udens \Lambda$ arbirarily close to 1
then it admits tilings.
\end{theorem}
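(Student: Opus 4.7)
The plan is to extract a tiling as the weak limit of a sequence of packings whose densities approach $1$. The first ingredient is a uniform discreteness bound: the hypothesis that $f>1/2$ on a positive-measure set $E$ together with Steinhaus's theorem yield $\delta>0$ such that the continuous function $v\mapsto|E\cap(E+v)|$ is strictly positive on $Q_{2\delta}$. Every packing set $\Lambda$ is then forced to be $\delta$-uniformly discrete, since two distinct points $\lambda_1,\lambda_2\in\Lambda$ with $\lambda_1-\lambda_2\in Q_{2\delta}$ would give $f(x-\lambda_1)+f(x-\lambda_2)>1$ on the positive-measure set $(E+\lambda_1)\cap(E+\lambda_2)$.

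Next I would choose packings $\Lambda_n$ with $\udens\Lambda_n\ge 1-1/n$ and cubes $x_n+Q_{R_n}$ on which the density is $\ge 1-2/n$ (with $R_n$ taken as large as needed). A pigeonhole on a partition of $x_n+Q_{R_n}$ into sub-cubes of side $r_n$ produces a sub-cube $y_n+Q_{r_n}$ of density $\ge 1-2/n$ for any $r_n\le R_n$, and I would choose $r_n\to\infty$ slowly enough that $r_n^d/n\to 0$ (for instance $r_n=n^{1/(2d)}$) so that the absolute defect $r_n^d-|\Lambda_n\cap(y_n+Q_{r_n})|$ actually tends to $0$. Setting $\Lambda_n':=\Lambda_n-y_n$ produces $\delta$-separated packings centered on the good cube, and a subsequence converges weakly to some $\Lambda$. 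The packing condition passes to this weak limit via a truncation $f_K=f\chi_{Q_K}$: for each bounded $A$ the convolution $f_K*\chi_A$ is continuous and compactly supported, so weak convergence gives $\int_A f_K*\delta_{\Lambda_n'}\to\int_A f_K*\delta_\Lambda$ and the bound by $|A|$ transfers; monotone convergence in $K$ then yields $f*\delta_\Lambda\le 1$ almost everywhere.

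Upgrading packing to tiling requires $\int_{Q_R}f*\delta_\Lambda=R^d$ for every $R$, which is where the main technical difficulty lies. The naive estimate $\int_{Q_{r_n}}(1-f*\delta_{\Lambda_n'})\le r_n^d/n+O(r_n^{d-1})$ carries a boundary slack of order $r_n^{d-1}$ that is benign for $d=1$ but blows up for $d\ge 2$. To remove it I would use the Tonelli identity
\[
\int_{y+Q_R}f*\delta_{\Lambda_n'}=\int f(z)\,|\Lambda_n'\cap(y+Q_R-z)|\,dz,
\]
which shows that the average of $y\mapsto\int_{y+Q_R}f*\delta_{\Lambda_n'}$ over $y\in Q_{r_n-R}$ tends to $R^d$ as $n\to\infty$; since the integrand is pointwise $\le R^d$, some translate $y_n^{*}$ achieves $\int_{y_n^{*}+Q_R}f*\delta_{\Lambda_n'}\ge R^d-o(1)$. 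Absorbing this further translation into a diagonal extraction over $R\to\infty$ and taking a new weak limit $\Lambda^{*}$ yields $\int_{Q_R}f*\delta_{\Lambda^{*}}=R^d$ for every $R$; combined with $f*\delta_{\Lambda^{*}}\le 1$, this forces $f*\delta_{\Lambda^{*}}=1$ almost everywhere, so $\Lambda^{*}$ is a tiling.
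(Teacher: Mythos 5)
Your proof is correct and shares the paper's overall skeleton --- extract a weak limit of near-optimal packings, check that the packing inequality survives the limit, and show that the defect $\int_{Q_R}(1-f*\delta_\Lambda)$ vanishes --- but you reach the crucial intermediate estimate by a genuinely different route. The paper's Lemma \ref{lm:density-integral} produces, for each \emph{fixed} $R$ and $\epsilon>0$, a packing with $\int_{Q_R}f*\delta_\Lambda\ge\Abs{Q_R}-\epsilon$ by contradiction: if every translate of $Q_R$ had integral $<\rho\Abs{Q_R}$, then partitioning a huge cube of side $NR$ into translates of $Q_R$ would cap its integral at $\rho(NR)^d$, contradicting the point count inside it; the $O((NR)^{d-1})$ boundary loss is harmless there because it is compared against $(NR)^d$ rather than against a fixed-scale defect. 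You instead force the \emph{absolute} point-count defect to vanish at a slowly growing scale $r_n=n^{1/(2d)}$ and then run a second, Tonelli-type averaging over translates of $Q_R$ inside $Q_{r_n}$ to convert that into the integral statement at scale $R$. Both work; the paper's version is shorter, while yours correctly diagnoses and repairs the boundary obstruction that a naive one-step estimate would hit. Two of your steps are compressed and should be recorded explicitly: (i) in the Tonelli average the error term behaves like $\int f(z)(R+\Abs{z})r_n^{-1}\,dz$, and since $\int f(z)\Abs{z}\,dz$ may diverge you must split into $\Abs{z}\le M_n$ and $\Abs{z}>M_n$ with $M_n\to\infty$ slowly, using the uniform discreteness to count points of $\Lambda_n'$ in the shell $Q_{r_n}\setminus(Q_{r_n-2R}-z)$; (ii) passing the lower bound $\int_{Q_R}f*\delta_{\Lambda_n''}\ge R^d-o(1)$ to the weak limit requires, besides your $f_K$ truncation, that $\int_{Q_R}(f-f_K)*\delta_{\Lambda_n''}$ be small uniformly in $n$ for large $K$ --- this is precisely the paper's Lemma \ref{lm:tail}, and it again follows from uniform discreteness plus $f\in L^1$. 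Neither point is a gap, only bookkeeping to be written out.
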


\noindent
{\bf Remark:}
Notice that if $f+\Lambda$ is a packing and $f$ satisfies \eqref{assumptions}
then there is a constant $\delta_0$, which depends only on $f$, such that
any two points of $\Lambda$ are at least $\delta_0$ apart.

We organize the proof in a few lemmas.

\begin{lemma}\label{lm:density-integral}
Assume \eqref{assumptions}.
Suppose $f$ has packings $f+\Lambda^n$ such that $\udens \Lambda^n \to 1$.
Then for any $R>0$ and for any $\epsilon>0$ there is a packing set of translates $\Lambda$ such that
$$
\int_{Q_R} f*\delta_{\Lambda} \ge \Abs{Q_R} - \epsilon.
$$
\end{lemma}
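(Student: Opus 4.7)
The plan is to extract, from the hypothesised family of packings $f+\Lambda^n$, a single packing $f+\Lambda$ that is essentially full inside some very large cube $Q_M$, and then to run a pigeonhole average over a grid of $\lfloor M/R\rfloor^d$ disjoint translates $x_i+Q_R$ inside $Q_M$. Since the total mass of $f*\delta_\Lambda$ in $Q_M$ will be close to the tiling level $M^d$ and the packing condition caps each cube-piece at $R^d$, at least one piece must collect mass nearly $R^d = \Abs{Q_R}$; a translation of $\Lambda$ by $-x_i$ then yields the required set.

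Concretely, I first fix a small $\eta>0$, to be optimised at the end, and use $\udens \Lambda^n \to 1$ together with the definition of upper density to select $n$ and a translate of $\Lambda^n$ (call the result $\Lambda$) with $\Abs{\Lambda \cap Q_M} \ge (1-\eta) M^d$ for a very large $M$. By Fubini,
$$
\int_{Q_M} f*\delta_\Lambda \;=\; \sum_{\lambda \in \Lambda} \int_{Q_M - \lambda} f.
$$
For $\lambda$ lying in the inner cube $Q_{M-2S}$ one has $Q_{2S} \subseteq Q_M - \lambda$, so the summand is at least $\int_{Q_{2S}} f$. Since $\int f = 1$ and $f\in L^1$, there is $S = S(\eta)$ with $\int_{Q_{2S}} f \ge 1 - \eta$; and the uniform $\delta_0$-separation from the Remark bounds the number of $\Lambda$-points in the boundary shell $Q_M\setminus Q_{M-2S}$ by a constant times $S M^{d-1}/\delta_0^d$. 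For $M$ large enough relative to $S$ and $\eta$, these combine to give $\int_{Q_M} f*\delta_\Lambda \ge (1 - 3\eta) M^d$.

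For the final step I tile $Q_M$, discarding a boundary slab of thickness less than $R$, by $\lfloor M/R\rfloor^d$ disjoint translates $x_i+Q_R$. The packing condition $f*\delta_\Lambda \le 1$ almost everywhere caps each integral $\int_{x_i+Q_R} f*\delta_\Lambda$ at $R^d$; summing and comparing to the lower bound on $\int_{Q_M}f*\delta_\Lambda$ forces at least one index $i$ to satisfy $\int_{x_i+Q_R} f*\delta_\Lambda \ge \Abs{Q_R} - \epsilon$, once $\eta$ is chosen small and $M$ large. Translating $\Lambda$ by $-x_i$ then finishes the proof.

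The main technical obstacle is arranging three nested quantifiers in the right order: first $\eta$ small enough that the cumulative loss is well below $\epsilon$; then $S$ large enough that $\int_{Q_{2S}} f \ge 1-\eta$ (this uses only $f\in L^1$, not any decay rate); and finally $M$ so large that both the boundary shell of $Q_M$ is negligible in volume relative to $M^d$ and the slab lost in partitioning $Q_M$ into $R$-cubes is negligible. The uniform $\delta_0$-separation guaranteed by the Remark is essential for the shell $\Lambda$-count bound; without it, the non-compact support of $f$ would become a genuine obstacle rather than a minor nuisance.
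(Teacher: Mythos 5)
Your proof is correct and is essentially the paper's own argument, just run directly instead of by contradiction: the same ingredients appear in both (a translate of some $\Lambda^n$ that is dense in a huge cube, the inner-cube estimate via $\int_{Q_{2S}}f\ge 1-\eta$, the boundary-shell point count from the $\delta_0$-separation, and a partition of the big cube into $R$-cubes), with your pigeonhole over the $R$-cubes playing the role of the paper's assumed uniform bound $\rho\Abs{Q_R}$ on every sub-cube. Your quantifier ordering ($\eta$ small relative to $\epsilon/R^d$, then $S$, then $M$ large) is the right one, so no gap.
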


\begin{proof}
The statement is equivalent to that for any $R>0$ and $0 < \rho < 1$ there is a packing set $\Lambda$
such that
\beql{density-ratio}
\int_{Q_R} f*\delta_{\Lambda} \ge \rho \Abs{Q_R}.
\eeq
Suppose not. Then there is $R>0$ and $\rho<1$ such that
for every $x \in \RR^d$ we have
\beql{tmp-1}
\int_{x+Q_R} f*\delta_{\Lambda} < \rho\Abs{Q_R},
\eeq
for any choice of a packing set $\Lambda$.
Pick $n$ such that $\udens \Lambda^n > \rho' > \rho$ and let $Q$ be a cube of side $N\cdot R$ such that
\beql{tmp-3}
\Abs{\Lambda^n \cap Q} \ge \rho' \Abs{Q} = \rho' (N R)^d.
\eeq
Partitioning $Q$ in translates of $Q_R$ we obtain from \eqref{tmp-1} that
\beql{tmp-2}
\int_Q f*\delta_{\Lambda^n} < \rho\Abs{Q} = \rho (N R)^d.
\eeq
Let $\epsilon>0$ and $\Delta>0$ be such that $\int_{Q_\Delta} f > 1-\epsilon$ and define the cube $Q'$ to
have the same center as $Q$ and have side length $N\cdot R - 2\Delta$ so that the $\ell^1$ distance from $Q'$ to $Q^c$
is $\Delta$.
Observe that $\Abs{\Lambda^n \cap (Q\setminus Q')} \le C \Delta (N R)^{d-1}$, where $C>0$ depends only on $f$.

We have
\begin{align*}
\int_Q f*\delta_{\Lambda^n}
 & \ge (1-\epsilon) \Abs{\Lambda^n \cap Q'}\\
 & \ge (1-\epsilon) \big(\rho'(N R)^d - C\Delta (N R)^{d-1}\big).
\end{align*}
If $\epsilon$ is chosen so that $(1-\epsilon)\rho' > \rho$ then we have a contradiction
with \eqref{tmp-2} if $N$ is sufficiently large.

\end{proof}

\begin{lemma}\label{lm:packing-limits}
Assume \eqref{assumptions}.
Suppose $f+\Lambda^n$ are packings and $\Lambda^n \to \Lambda$ weakly.
Then $f+\Lambda$ is also a packing.
\end{lemma}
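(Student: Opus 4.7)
My plan is to show $f * \delta_\Lambda \le 1$ almost everywhere by testing against an arbitrary continuous compactly supported $\phi \ge 0$: it suffices to establish
$$
\int \phi \cdot (f * \delta_\Lambda) \le \int \phi
$$
for all such $\phi$, since Lebesgue differentiation then forces $f * \delta_\Lambda \le 1$ a.e. Fix $\phi$ with $\supp \phi \subseteq Q_S$. By Tonelli,
$$
\int \phi \cdot (f * \delta_\Lambda) = \sum_{\lambda \in \Lambda} g(\lambda), \qquad g(\lambda) := \int \phi(x)\, f(x-\lambda)\, dx,
$$
and the analogous identity holds with $\Lambda$ replaced by any $\Lambda^n$. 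Since $\phi$ is continuous with compact support and $f \in L^1$, the function $g$ is bounded and uniformly continuous on $\RR^d$. The packing hypothesis for $f + \Lambda^n$ forces $\sum_{\lambda \in \Lambda^n} g(\lambda) \le \int \phi$ for every $n$, so the task reduces to proving $\sum_{\lambda \in \Lambda^n} g(\lambda) \to \sum_{\lambda \in \Lambda} g(\lambda)$ as $n \to \infty$.

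I would split each sum into its restriction to $Q_R$ plus the tail outside $Q_R$. The remark following Theorem~\ref{th:main} supplies a common separation constant $\delta_0 > 0$ for all the $\Lambda^n$, and weak convergence transfers this separation to the limit $\Lambda$. Choosing $R$ so that $\Lambda$ avoids $\partial Q_R$, the inner sums involve a uniformly bounded number of points, and weak convergence pairs each $\lambda^n \in \Lambda^n \cap Q_R$ with a point of $\Lambda \cap Q_R$ at distance at most $\epsilon$; uniform continuity of $g$ then collapses the two truncated sums to a common limit as $n \to \infty$ and $\epsilon \to 0$.

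The delicate step is a tail bound that is \emph{uniform in $n$}. For any set $\Lambda'$ with separation $\ge \delta_0$, the pointwise bound $g(\lambda) \le \Linf{\phi} \int_{Q_S - \lambda} f$ combined with Tonelli gives
$$
\sum_{\lambda \in \Lambda',\ \lambda \notin Q_R} g(\lambda) \le \Linf{\phi} \int_{\RR^d} f(y)\, \#\Set{\lambda \in \Lambda' \setminus Q_R : y + \lambda \in Q_S}\, dy;
$$
uniform discreteness bounds the cardinality by a constant $C = C(\delta_0, S)$, and the integrand vanishes unless $\Abs{y} \ge R - S$, so the right-hand side is at most $C \Linf{\phi} \int_{\Abs{y} \ge R - S} f$, which tends to $0$ as $R \to \infty$ independently of $\Lambda'$. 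Applying this uniform estimate to $\Lambda$ and to all $\Lambda^n$ closes the argument.

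The main obstacle is precisely this uniform tail bound: since $f$ need not have compact support nor any prescribed decay, $g$ need not be compactly supported and no pointwise domination of the summands is available. What saves us is the uniform separation inherited by the weak limit, which together with $f \in L^1$ keeps the $g$-mass near infinity uniformly small across the whole sequence of packings.
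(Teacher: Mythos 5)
Your argument is correct, but it is genuinely different from the paper's. The paper works pointwise: it fixes $R$, writes the finite sum $\sum_{\lambda\in\Lambda\cap Q_R} f(x-\lambda)$, uses weak convergence to pick $\lambda^n_j\in\Lambda^n$ with $\lambda^n_j\to\lambda_j$, and passes to the limit in the finite sum (via $L^1$-continuity of translation, hence a.e.\ along a subsequence), so each truncated sum is $\le 1$ a.e.; letting $R\to\infty$ finishes by monotone convergence. Note that this only uses the ``one-sided'' inclusion $\Lambda\cap Q_R\subseteq \Lambda^n+Q_\epsilon$ and requires \emph{no} control of tails whatsoever. You instead dualize, testing $f*\delta_\Lambda$ against nonnegative $\phi\in C_c$ and proving $\sum_{\lambda\in\Lambda^n}g(\lambda)\to\sum_{\lambda\in\Lambda}g(\lambda)$; this needs both inclusions in the definition of weak convergence, the common separation constant $\delta_0$ (available by the remark after Theorem \ref{th:main}, and built into the paper's definition of weak convergence for the limit set), and your uniform-in-$n$ tail estimate, which is essentially a testing-form reproof of Lemma \ref{lm:tail}. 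What your route buys is the stronger conclusion that $f*\delta_{\Lambda^n}\to f*\delta_\Lambda$ weakly as measures/distributions, which would also be reusable toward Lemma \ref{lm:packing-to-tiling}; what the paper's route buys is economy, since no tail bound is needed. In fact your ``delicate step'' is avoidable even within your framework: for the inequality $\int\phi\,(f*\delta_\Lambda)\le\int\phi$ you only need, for each fixed $R$, that $\sum_{\lambda\in\Lambda\cap Q_R}g(\lambda)\le\liminf_n\sum_{\lambda\in\Lambda^n}g(\lambda)\le\int\phi$, and then $R\to\infty$ by monotone convergence, so the full two-sided convergence of the sums (and hence the uniform tail bound) is more than the lemma requires.
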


\begin{proof}
For any $R>0$ we consider the finite sum
\beql{tmp-a-1}
\sum_{\lambda \in \Lambda \cap Q_R} f(x-\lambda) = \sum_{j=1}^N f(x-\lambda_j),
\eeq
where $\Lambda \cap Q_R = \Set{\lambda_1, \lambda_2, \ldots, \lambda_N}$.
By the weak convergence $\Lambda^n \to \Lambda$
we can find for each $n$ points
$$
\lambda^n_1, \lambda^n_2, \ldots, \lambda^n_N \in \Lambda^n
$$
such that $\lambda^n_j \to \lambda_j$ as $n \to \infty$, for $j=1,2,\ldots,N$.
Therefore the expression in \eqref{tmp-a-1} is the limit of
$$
\sum_{j=1}^N f(x-\lambda^n_j),
$$
as $n \to \infty$, which is at most 1 as $\Lambda^n$ are packing sets.
Since $R>0$ is arbitrary we conclude that $f+\Lambda$ is a packing.
\end{proof}


\begin{lemma}\label{lm:tail}
Assume \eqref{assumptions}.
Suppose $f+\Lambda^n$, $n=1,2,\ldots,$ are packings and $K \subseteq \RR^d$ is a compact set.
Then
\beql{tail}
\lim_{n\to\infty} \int_K \sum_{\lambda \in \Lambda^n \setminus Q_n} f(x-\lambda) = 0.
\eeq
\end{lemma}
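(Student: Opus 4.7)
The plan is to interchange the order of summation and integration and reduce the estimate to the $L^1$-tail of $f$, using only the fact that the $\Lambda^n$ share a common separation constant. By the Remark following Theorem \ref{th:main}, there is $\delta_0 = \delta_0(f) > 0$ such that every $\Lambda^n$ is $\delta_0$-separated, and this uniformity is the key structural input.

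Fix $M$ with $K \subseteq Q_M$. Two applications of Tonelli, first in $x$ and then after the substitution $y = x - \lambda$, rewrite the integral in \eqref{tail} as
$$
\int_{\RR^d} f(y)\, N^n(y)\, dy, \qquad N^n(y) := \Abs{(\Lambda^n \setminus Q_n) \cap (K - y)}.
$$
I would then establish two bounds on $N^n(y)$. First, the $\delta_0$-separation of $\Lambda^n$ implies that any translate of $K$ contains at most $C_K$ points of $\Lambda^n$, where $C_K$ depends only on $K$, $\delta_0$ and the dimension (by the usual disjoint-balls packing argument applied to $K + Q_{\delta_0}$). Second, for $y \in Q_{n-M}$ the translate $K - y$ is contained in $Q_n$, which excludes $\Lambda^n \setminus Q_n$ entirely, so $N^n(y) = 0$ there.

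Putting these together gives
$$
\int_K \sum_{\lambda \in \Lambda^n \setminus Q_n} f(x - \lambda)\, dx \ \le\ C_K \int_{\RR^d \setminus Q_{n-M}} f(y)\, dy,
$$
and the right-hand side tends to $0$ as $n \to \infty$ because $f \in L^1(\RR^d)$. The only potential subtlety is verifying that the multiplicity constant $C_K$ is genuinely uniform in $n$, but this is built into the uniform separation provided by the Remark; everything else is routine Fubini-and-change-of-variables bookkeeping. Notably, no compactness or weak-convergence argument is needed for this lemma.
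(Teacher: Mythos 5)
Your proof is correct and follows essentially the same route as the paper: uniform $\delta_0$-separation of the packing sets gives a bounded multiplicity constant, Tonelli converts the expression into a tail integral of $f$ over the complement of a large cube, and integrability of $f$ finishes the argument. The only cosmetic difference is that you count points of $\Lambda^n$ in translates of $K$ while the paper bounds the overlap of the translates $\lambda+K$, which are dual formulations of the same estimate.
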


\begin{proof}
Since the $f+\Lambda^n$ are packings it follows that there exists $\delta_0>0$ such that
the elements of any $\Lambda^n$ have a minimum distance $\ge \delta_0$ (this is a consequence of the last property in \eqref{assumptions}).
It follows that
there exists a positive constant $C$ so that
each point $x \in \RR^d$ is contained in at most $C$ of the sets
$$
\lambda+K,\ \ \ (\lambda \in \Lambda^n)
$$
for any $n$.
Then
\begin{align*}
\int_K \sum_{\lambda \in \Lambda^n \setminus Q_n} f(x-\lambda)
  &= \sum_{\lambda \in \Lambda^n \setminus Q_n} \int_{K+\lambda} f(x)\\
  &\le C \int_{Q_{n/2}^c} f(x),
\end{align*}
if $n$ is sufficiently large.
Since $f$ is integrable the latter integral can be made arbitrarily small if $n$
is sufficiently large.
\end{proof}

\begin{lemma}\label{lm:packing-to-tiling}
Assume \eqref{assumptions}.
Suppose $f+\Lambda^n$ are packings and $\Lambda^n \to \Lambda$ weakly.
If $K$ is a compact set and $\int_K f*\delta_{\Lambda^n} \to \Abs{K}$ then
\beql{tiling-on-K}
\int_K f*\delta_\Lambda = \Abs{K}
\eeq
and $f*\delta_\Lambda \equiv 1$ almost everywhere on $K$.
\end{lemma}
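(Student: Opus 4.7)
My plan is to prove $\int_K f*\delta_\Lambda = \Abs{K}$; the pointwise conclusion $f*\delta_\Lambda \equiv 1$ a.e.\ on $K$ then follows at once from Lemma \ref{lm:packing-limits}, which guarantees $f*\delta_\Lambda \le 1$ a.e.\ on all of $\RR^d$. That same lemma also yields the upper bound $\int_K f*\delta_\Lambda \le \Abs{K}$ for free, so the entire task reduces to establishing the matching lower bound.

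\textbf{Truncation of the sum.} Fix $\epsilon>0$. The calculation in the proof of Lemma \ref{lm:tail} uses only compactness of $K$ and the uniform $\delta_0$-separation that any packing of $f$ inherits from \eqref{assumptions}; rerunning it with the shifting cube $Q_n$ replaced by a fixed cube $Q_R$ gives a bound of the form
$$
\int_K \sum_{\lambda \in \Lambda^n \setminus Q_R} f(x-\lambda) \;\le\; C \int_{Q_{R'}^c} f,
$$
valid for every $n$, where $R' \to \infty$ as $R \to \infty$ and $C$ depends only on $f$, $\delta_0$ and $K$. Choosing $R$ so large that the right-hand side is less than $\epsilon$, and combining with the hypothesis $\int_K f*\delta_{\Lambda^n}\to\Abs{K}$, I obtain
$$
\int_K \sum_{\lambda \in \Lambda^n \cap Q_R} f(x-\lambda) \;\ge\; \Abs{K} - 2\epsilon
$$
for all sufficiently large $n$.

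\textbf{Passing to the weak limit.} Enumerate $\Lambda \cap Q_{R+1} = \Set{\mu_1,\ldots,\mu_M}$ (finite, by uniform discreteness). The definition of weak convergence together with the common separation $\delta_0$ guarantees that for every small $\eta>0$ and every sufficiently large $n$ there is a unique $\mu_j^n \in \Lambda^n$ with $\Abs{\mu_j^n-\mu_j}<\eta$, and that every $\lambda \in \Lambda^n \cap Q_R$ is within $\eta$ of some (unique) $\mu_j$, hence equals the corresponding $\mu_j^n$. Therefore
$$
\int_K \sum_{\lambda \in \Lambda^n \cap Q_R} f(x-\lambda) \;\le\; \int_K \sum_{j=1}^M f(x-\mu_j^n),
$$
and by $L^1$-continuity of translation the right-hand side converges, as $n\to\infty$, to $\int_K \sum_{j=1}^M f(x-\mu_j) \le \int_K f*\delta_\Lambda$. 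Combining the two displays yields $\int_K f*\delta_\Lambda \ge \Abs{K}-2\epsilon$, and letting $\epsilon\to 0$ closes the argument.

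\textbf{Where the difficulty lies.} The main obstacle is that one cannot simply take a pointwise limit of $f*\delta_{\Lambda^n}$: because $f$ is only in $L^1$ and need not be lower semicontinuous, even a Fatou-type bound in the correct direction is unavailable. My route bypasses this by moving everything onto a \emph{finite} sum of translates, for which translation is $L^1$-continuous; the uniform separation makes the finite-versus-tail truncation painless, and the only genuinely delicate point is arranging the enumeration so that the finitely many $\mu_j^n$ in $\Lambda^n$ really do exhaust $\Lambda^n \cap Q_R$ rather than leaving stray boundary points unaccounted for.
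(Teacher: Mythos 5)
Your proof is correct and follows essentially the same route as the paper's: truncate to a finite sum of translates indexed by a large cube, use the uniform separation to control the tail (a fixed-cube variant of Lemma \ref{lm:tail}), pass to the limit via weak convergence and $L^1$-continuity of translation, and invoke Lemma \ref{lm:packing-limits} for the matching upper bound and the pointwise conclusion. If anything, you are slightly more explicit than the paper on the two points it asserts in passing — the tail bound uniform in $n$ for a fixed cube, and the fact that $\Lambda^n\cap Q_R$ is exhausted by the points of $\Lambda^n$ matched to $\Lambda\cap Q_{R+1}$.
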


\begin{proof}
Let $R>0$ and write
$$
\Set{\lambda_1, \lambda_2, \ldots, \lambda_N} = \Lambda \cap Q_R,
$$
for some positive integer $N$.
We can now choose $\lambda^n_j \in \Lambda^n$, for $j=1,2,\ldots,N$, such that $\lambda^n_j \to \lambda_j$ as $n \to \infty$.

Then
\begin{align}
\int_K f*\delta_\Lambda
 & \ge \int_K \sum_{j=1}^N f(x-\lambda_j) \nonumber\\
 & = \lim_{n\to\infty} \int_K \sum_{j=1}^N f(x-\lambda_j^n)\ \ \ \text{(by the $L^1$ continuity of translation)} \nonumber\\
 & = \lim_{n \to \infty} \left( \int_K f*\delta_{\Lambda^n} - \int_K \sum_{\lambda \in \Lambda^n\setminus\Set{\lambda^n_1,\ldots,\lambda^n_N}} f(x-\lambda^n_j) \right) \nonumber\\
 & = \Abs{K} - \lim_{n\to\infty} \int_K \sum_{\lambda \in \Lambda^n\setminus\Set{\lambda^n_1,\ldots,\lambda^n_N}} f(x-\lambda^n_j) \label{tmp-b-1}
\end{align}
If $n$ is sufficiently large then all points of $\Lambda^n \cap Q_{R/2}$ are among the points $\lambda^n_1,\ldots,\lambda^n_N$ so 
the integral in \eqref{tmp-b-1} is at most
$$
\int_K \sum_{\lambda \in \Lambda^n \cap Q_{R/2}^c} f(x-\lambda)
$$
which tends to 0 as $R\to\infty$, by Lemma \ref{lm:tail}.

Since the limit in \eqref{tmp-b-1} can be made arbitrarily small
it follows that $\int_K f*\delta_\Lambda = \Abs{K}$.
Since $f+\Lambda$ is a packing, from Lemma \ref{lm:packing-limits}, this implies that $f+\Lambda$ is globally a packing and a tiling on $K$.

\end{proof}

\begin{lemma}\label{lm:local-to-global}
Assume \eqref{assumptions}.
If for every $n$ there is a packing $f+\Lambda^n$ for which
\beql{covering}
\int_{Q_n} f*\delta_{\Lambda^n} \ge \Abs{Q_n}-\frac{1}{n},
\eeq
then there is a subsequence of $\Lambda^n$ which converges weakly to a packing set
$\Lambda$.
\end{lemma}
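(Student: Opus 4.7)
The plan is to invoke compactness: since the packing condition forces a common separation constant, the sets $\Lambda^n$ live in a compact space with respect to the weak topology defined in Section \ref{sec:notations}, and any subsequential limit must inherit the packing property from Lemma \ref{lm:packing-limits}.

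First I would point out that by the Remark following Theorem \ref{th:main}, there is a single constant $\delta_0>0$, depending only on $f$, which is a separation constant for every $\Lambda^n$ (this is where the hypothesis that each $f+\Lambda^n$ is a packing is used; condition \eqref{covering} plays no role in the present lemma, but will be needed when this lemma is combined with Lemma \ref{lm:packing-to-tiling}). Uniform discreteness gives an $n$-independent upper bound $N(R)$ on the cardinality $\Abs{\Lambda^n \cap Q_R}$ for each $R>0$.

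Next I would extract the subsequence by a diagonal argument. Fix radii $R_k \to \infty$. For $k=1$, pass to a subsequence of $\Lambda^n$ along which $\Abs{\Lambda^n \cap Q_{R_1}}$ is eventually constant and, by compactness of $\overline{Q_{R_1}}^{N(R_1)}$ after labeling the points, each individual point converges to a limit. Iterating this for $k=2,3,\ldots$ and taking a diagonal subsequence yields a single subsequence, still called $\Lambda^n$, and a finite set $\Lambda_{R_k} \subseteq \overline{Q_{R_k}}$ to which the listed points of $\Lambda^n \cap Q_{R_k}$ converge pointwise for every $k$. Defining $\Lambda = \bigcup_k \Lambda_{R_k}$, the separation constant $\delta_0$ is inherited in the limit so $\Lambda$ is $\delta_0$-separated, and the two inclusions defining weak convergence follow directly: $\Lambda^n \cap Q_R \subseteq \Lambda + Q_\epsilon$ because each listed point is within $\epsilon$ of its limit for large $n$, and $\Lambda \cap Q_R \subseteq \Lambda^n + Q_\epsilon$ by the same pointwise convergence applied to the (finitely many) elements of $\Lambda \cap Q_R$.

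The only delicate point is boundary behavior: a point of $\Lambda^n \cap Q_R$ could drift out across $\partial Q_R$, and a limit point of $\Lambda$ could land on $\partial Q_R$ without being realized as a limit of interior points. I would handle this by choosing the sequence $R_k$ so that no limit point of any $\Lambda^n$ lies on the boundary of $Q_{R_k}$ — since the countable collection of realized limits is $\delta_0$-separated, almost every radius $R$ has this property, so such a choice exists. After this, the two inclusions above hold without qualification. Finally, Lemma \ref{lm:packing-limits} applied to the weakly convergent subsequence immediately shows that $f+\Lambda$ is itself a packing, completing the proof.
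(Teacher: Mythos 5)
Your proof is correct, but it takes a genuinely different route from the paper. The paper enumerates each $\Lambda^n$ globally, in increasing order of magnitude, and then must show that for each fixed index $j$ the sequence $\Abs{\lambda^n_j}$ stays bounded in $n$ before it can diagonalize; this boundedness is exactly where the hypothesis \eqref{covering} is used (via Lemma \ref{lm:tail}: if the $j$-th point escaped, at most $j-1$ translates would have to nearly cover $Q_j$, which is impossible). You instead run the standard local compactness argument for uniformly $\delta_0$-separated sets on an exhaustion by cubes $Q_{R_k}$, and — as you correctly note — this never uses \eqref{covering}: any sequence of packing sets has a weakly convergent subsequence, possibly with a ``thin'' or even empty limit. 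That weaker conclusion is all the lemma literally asserts and all that the proof of Theorem \ref{th:main} needs, since the near-covering information re-enters downstream through Lemma \ref{lm:packing-to-tiling} via $\int_K f*\delta_{\Lambda^n}\to\Abs{K}$; the paper's choice to use \eqref{covering} inside this lemma is what makes its enumeration-by-magnitude diagonalization work and, incidentally, prevents the limit from losing points to infinity at this stage. One small remark on your ``delicate point'': the boundary-avoidance choice of the radii $R_k$ is in fact unnecessary, because you verify $\Lambda^n\cap Q_R\subseteq\Lambda+Q_\epsilon$ by embedding $Q_R$ in a strictly larger cube $Q_{R_k}$ with constant cardinality and convergent labeled points along the subsequence, and you verify $\Lambda\cap Q_R\subseteq\Lambda^n+Q_\epsilon$ from the finitely many already-realized limits; this is just as well, since the justification you offer for that choice is shaky — the set of \emph{all} subsequential limit points of the $\Lambda^n$ need not be countable or $\delta_0$-separated (only the limits realized along your fixed subsequence and labeling are), so ``almost every radius works'' would require the extraction to be done first. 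Since that step can simply be deleted, it does not affect the validity of the argument.
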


\begin{proof}
Number the elements of $\Lambda^n$ as $\lambda^n_1, \lambda^n_2, \ldots$, in increasing
order of magnitude, breaking ties arbitrarily.
We claim that for all $j=1,2,\ldots$, the sequence
\beql{seq-bounded}
\Abs{\lambda^n_j},\ \ \ n=1,2,3,\ldots,
\eeq
is bounded.
If not then we apply Lemma \ref{lm:tail} with $R = \Abs{\lambda^n_j}$
(which can be arbitrarily large) to get that
$$
\int_{Q_j} \sum_{\lambda \in \Lambda^n,\ \Abs{\lambda}>R} f(x-\lambda)
$$
can be arbitrarily small.
Because of \eqref{covering}
we deduce that the integral
$$
\int_{Q_j} \sum_{k=1}^j f(x-\lambda^n_k)
$$
must be arbitrarily close to $\Abs{Q_j}/2 = j^d/2$, but this is impossible as the above
sum has $j$ terms each of which can contribute at most 1 to the integral.

Since for each $j=1,2,\ldots$ the sequence $\Abs{\lambda^n_j}$ is bounded,
it follows by a standard diagonal argument that there exists a subsequence
of $\Lambda^n$, call it again $\Lambda^n$,  such that for all $j$ the sequence
$\lambda^n_j$ has a limit
$$
\lambda_j = \lim_n \lambda^n_j.
$$
Let $\Lambda = \Set{\lambda_1, \lambda_2, \ldots}$ and observe that
$\Lambda$ is the weak limit of $\Lambda^n$ and $\Lambda$ is a packing set
because of Lemma \ref{lm:packing-limits}.

\end{proof}


\begin{proof}[Proof of Theorem \ref{th:main}]
From Lemma \ref{lm:density-integral} we conclude that $f$ has packings $\Lambda^n$
such that
$$
\int_{Q_n} f*\delta_{\Lambda^n} \ge \Abs{Q_n} - \frac{1}{n}.
$$
Notice that this implies that for any compact $K$ and sufficiently large $n$ we have
$$
\int_K f*\delta_{\Lambda^n} \ge \Abs{K} - \frac{1}{n}.
$$
Lemma \ref{lm:local-to-global} now implies that $\Lambda^n$ has a subsequence, call it $\Lambda^n$ again,
which converges weakly to a packing set $\Lambda$.
Lemma \ref{lm:packing-to-tiling} now shows that $f+\Lambda$ is a tiling on any compact $K$, hence on all of $\RR^d$.

\end{proof}

\section{Product domains which are spectral}
\label{sec:cylinders}

In this section we make use of Theorem \ref{th:main} in order to show that the
spectrality of certain products implies the spectrality of the factors.


\subsection{Orthogonal packing regions}
\label{sec:orthopack}

Suppose that $A \subseteq \RR^m$ is such that for a set $D \subseteq \RR^m$ we have
$$
(D-D) \cap \Set{\ft{\chi_A}=0} = \emptyset.
$$
The set $D$ is called an {\em orthogonal packing region} for $A$. If it is is also true
that $\Abs{D} = \Abs{A}^{-1}$ then $D$ is called a {\em tight orthogonal packing region} for $A$
\cite{kolountzakis2000packing,lagarias2000orthonormal}.

If $\Lambda$ is an orthogonal set of exponentials and $D$ is an orthogonal packing region
for $A$ then, because of \eqref{zeros-condition} we have
$$
(D-D) \cap (\Lambda-\Lambda) = \Set{0},
$$
which implies that $D+\Lambda$ is a packing and, therefore, that
\beql{packing-region-size}
\Abs{D} \le (\udens{\Lambda})^{-1}.
\eeq
If $\Lambda$ is also complete then $\dens\Lambda = \Abs{A}$
so that, in this case, we have
\beql{packing-region-size-spectral}
\Abs{D} \le \Abs{A}^{-1}.
\eeq
Another way to view \eqref{packing-region-size-spectral} is to say that if a set $A$ has an orthogonal
packing region of size $> \Abs{A}^{-1}$ then $A$ cannot be spectral.

\begin{theorem}\label{th:cylinders}
Suppose $\Omega = A \times B \subseteq \RR^m\times \RR^n$ has $\Abs{A} = \Abs{B} = 1$, and suppose also
that the bounded set $D \subseteq \RR^m$ is such that
\beql{region}
(D-D) \cap \Set{\ft{\chi_A}=0} = \emptyset.
\eeq
\begin{itemize}
\item[(a)] If $\Abs{D}=1$ and $\Omega$ is spectral then $B$ is also spectral.
\item[(b)] If $\Abs{D}>1$ then neither $A$ nor $\Omega$ can be spectral.
\end{itemize}
\end{theorem}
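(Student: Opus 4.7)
The plan for both parts is to analyze a hypothetical spectrum $\Lambda$ of $\Omega = A \times B$ (the non-spectrality of $A$ in part (b) being immediate from \eqref{packing-region-size-spectral}) by slicing it with horizontal strips and averaging over the horizontal translate.

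For each $x_0 \in \RR^m$ set
$$M(x_0) := \pi_2 \big( \Lambda \cap ((x_0 + D) \times \RR^n) \big).$$
If $\lambda_1 \neq \lambda_2$ lie in $\Lambda \cap ((x_0 + D) \times \RR^n)$, then $\pi_1(\lambda_1 - \lambda_2) \in D - D$, which by \eqref{region} avoids $\Set{\ft{\chi_A} = 0}$; the orthogonality condition \eqref{zeros-condition} for $\Lambda$ as a spectrum of $A \times B$ then forces $\ft{\chi_B}(\pi_2(\lambda_1 - \lambda_2)) = 0$. This shows that $\pi_2$ is injective on the slice (otherwise $\ft{\chi_B}(0) = \Abs{B} = 1 \neq 0$) and that $M(x_0)$ is an orthogonal set for $B$, so $\Abs{\ft{\chi_B}}^2 + M(x_0)$ is a packing of $\RR^n$ at level $1$. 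A Fubini-type computation, using that $D$ is bounded and that $\Lambda$ has uniform density $\dens \Lambda = \Abs{\Omega} = 1$ (a consequence of the tiling \eqref{tiling-condition}), yields
$$\int_{Q_{R'}^m} \Abs{M(x_0) \cap Q_R^n}\, dx_0 = \Abs{D} \cdot (R')^m R^n \cdot (1 + o(1))$$
as $R, R' \to \infty$, the error absorbing boundary corrections from the $\lambda \in \Lambda$ near $\partial Q_{R'}^m$. Consequently some $x_0 \in Q_{R'}^m$ achieves $\Abs{M(x_0) \cap Q_R^n} \geq (\Abs{D} - o(1)) R^n$. I also need the matching uniform upper bound $\Abs{M \cap Q_R^n} \leq (1 + o(1)) R^n$ as $R \to \infty$, valid for \emph{every} orthogonal set $M$ of $B$; this follows by integrating the pointwise inequality $\sum_\mu \Abs{\ft{\chi_B}}^2(\cdot - \mu) \leq 1$ over $Q_R^n$ and exploiting the $L^1$-concentration of $\Abs{\ft{\chi_B}}^2$ near the origin.

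For part (b), the averaging produces $x_0$ with $\Abs{M(x_0) \cap Q_R^n}/R^n$ arbitrarily close to $\Abs{D} > 1$, directly contradicting the uniform upper bound; hence $\Omega$ is not spectral. For part (a), where $\Abs{D} = 1$, the averaging yields, for each $\delta > 0$ and each sufficiently large $R$, an orthogonal set $M$ of $B$ with $\Abs{M \cap Q_R^n} \geq (1 - \delta) R^n$; the $L^1$-concentration of $\Abs{\ft{\chi_B}}^2$ converts this count bound to the integral bound $\int_{Q_R^n} \Abs{\ft{\chi_B}}^2 * \delta_M \geq R^n - \epsilon$ for any prescribed $\epsilon > 0$, which is precisely the conclusion of Lemma \ref{lm:density-integral} applied to $f = \Abs{\ft{\chi_B}}^2$. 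This $f$ satisfies \eqref{assumptions} ($\int f = \Abs{B} = 1$ by Plancherel, and $f(0) = \Abs{B}^2 = 1 > 1/2$ extending to a neighborhood by continuity), so Lemmas \ref{lm:local-to-global} and \ref{lm:packing-to-tiling} combine as in the proof of Theorem \ref{th:main} to produce a tiling of $\Abs{\ft{\chi_B}}^2$ at level $1 = \Abs{B}^2$, which by \eqref{tiling-condition} is a spectrum of $B$. The main technical obstacles are the boundary bookkeeping—in both the Fubini step and the count-to-integral conversion—and the need to establish uniform (rather than merely asymptotic) density for $\Lambda$; all are controlled once one takes $R$ and $R'$ much larger than $\diam D$ and the effective $L^1$-support scale of $\Abs{\ft{\chi_B}}^2$.
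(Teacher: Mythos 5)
Your construction is essentially the paper's: the slice $M(x_0)=\pi_2\big(\Lambda\cap((x_0+D)\times\RR^n)\big)$, the observation that \eqref{region} together with \eqref{zeros-condition} and the product form of $\ft{\chi_\Omega}$ makes $M(x_0)$ an orthogonal (and genuine, not multi-) set for $B$, and the Fubini averaging over $x_0$ are exactly the content of Lemma \ref{lm:window-density}; the non-spectrality of $A$ in (b) is handled as in \S\ref{sec:orthopack}. Your part (b), phrased as a count comparison at a single large scale against the uniform upper bound for orthogonal sets of $B$, is correct.

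The gap is in part (a), at the sentence claiming that the $L^1$-concentration of $\Abs{\ft{\chi_B}}^2$ converts the count bound $\Abs{M\cap Q_R^n}\ge(1-\delta)R^n$ into $\int_{Q_R^n}\Abs{\ft{\chi_B}}^2*\delta_M\ge R^n-\epsilon$ for an arbitrary prescribed $\epsilon$. For the set $M$ and the scale $R$ your averaging produces, the defect you can actually prove is of order $\delta R^n+\eta_T R^n+T R^{n-1}$ (tail of $f$ outside $Q_T$ plus boundary effects), i.e.\ a small \emph{proportion} of $\Abs{Q_R}$, never a fixed additive $\epsilon$ at that same scale; the boundary term alone already exceeds any fixed $\epsilon$ for large $R$. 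But Lemmas \ref{lm:local-to-global} and \ref{lm:packing-to-tiling} genuinely require the additive-error statement (hypothesis \eqref{covering}, and $\int_K f*\delta_{\Lambda^n}\to\Abs{K}$ for a fixed compact $K$); a relative error of size $\delta_n\Abs{Q_n}$ gives no information on a fixed $K$. Passing from ``relative density close to $1$ on one huge cube'' to ``additive error $\epsilon$ on a fixed cube, after translating the packing set'' is precisely the content of Lemma \ref{lm:density-integral}, whose proof subdivides the huge cube into translates of the target cube and pigeonholes -- an idea your conversion step does not supply, so as written you are assuming the conclusion of that lemma rather than deriving it. The repair is straightforward: either prove the upper-density form of your averaging (that is Lemma \ref{lm:window-density}) and quote Theorem \ref{th:main} as a black box, as the paper does, or note that the proof of Lemma \ref{lm:density-integral} uses only a single large cube with relative count at least $\rho'$ close to $1$ -- which your family $M_R$ does provide -- and run that pigeonhole-and-translate argument before invoking Lemmas \ref{lm:local-to-global} and \ref{lm:packing-to-tiling}.
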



\subsection{Proof of Theorem \ref{th:cylinders}}
\label{sec:cyl-proof}


\begin{lemma}\label{lm:window-density}
Suppose $D \subseteq \RR^m$ is a bounded set and that $\Lambda \subseteq \RR^m\times\RR^n$ is
a uniformly discrete set of upper density $\udens\Lambda=\tau>0$.
For $x \in \RR^m$ write
\beql{proj-density-1}
\alpha(x) = \udens \pi_2 \big(\Lambda \cap \left( (x+D)\times \RR^n\big) \right) .
\eeq
Then
\beql{proj-density-2}
\sup_{x \in \RR^m} \alpha(x) \ge \Abs{D}\tau.
\eeq
\end{lemma}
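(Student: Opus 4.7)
The plan is to prove the lemma by an averaging (Fubini) argument in the first coordinate, followed by a compactness step to consolidate scale-by-scale estimates into a single point.

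First I would choose, by the definition of upper density, a sequence $R_k \to \infty$ and centers $w_k = (u_k, v_k) \in \RR^m \times \RR^n$ with
$$
\frac{\Abs{\Lambda \cap (w_k + Q_{R_k}^{m+n})}}{R_k^{m+n}} \longrightarrow \tau.
$$
Writing $M_x(y,R) := \Abs{\Lambda \cap ((x+D) \times (y + Q_R^n))}$, Fubini gives
$$
\int_{u_k + Q_{R_k}^m} M_x(v_k, R_k)\,dx \;=\; \sum_{\lambda} \Abs{(\pi_1\lambda - D) \cap (u_k + Q_{R_k}^m)},
$$
the sum running over $\lambda \in \Lambda$ with $\pi_2\lambda \in v_k + Q_{R_k}^n$. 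Each $\lambda$ with $\pi_1\lambda$ at distance at least $\diam D$ from the boundary of $u_k + Q_{R_k}^m$ contributes exactly $\Abs{D}$, and uniform discreteness of $\Lambda$ bounds the total count in the boundary layer by $O(R_k^{m+n-1})$ points. Combining, the integral is at least $\Abs{D}\tau R_k^{m+n} - o(R_k^{m+n})$.

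Dividing by $R_k^m$ and applying the mean-value (pigeonhole) principle, I obtain $x_k^* \in u_k + Q_{R_k}^m$ with
$$
\frac{M_{x_k^*}(v_k, R_k)}{R_k^n} \;\ge\; \Abs{D}\tau - o(1).
$$
Equivalently, the translate $\Lambda_k := \Lambda - (x_k^*, v_k)$ satisfies $\Abs{\Lambda_k \cap (D \times Q_{R_k}^n)}/R_k^n \to \Abs{D}\tau$; and $\sup_x \alpha_{\Lambda_k}(x) = \sup_x \alpha_\Lambda(x)$ by translation invariance (shifts in the first coordinate act on $\alpha$ as shifts in $x$, shifts in the second coordinate leave $\alpha$ unchanged).

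To extract a single point witnessing density $\Abs{D}\tau$ I would apply compactness: the $\Lambda_k$ share the same separation constant, so after a subsequence $\Lambda_k \to \Lambda^*$ weakly, by the diagonal argument of Lemma~\ref{lm:local-to-global}. A subcube partition of $D \times Q_{R_k}^n$ into translates of $D \times Q_R^n$ yields $t_k \in Q_{R_k}^n$ with $\Abs{\Lambda_k \cap (D \times (t_k + Q_R^n))} \ge (\Abs{D}\tau - o(1)) R^n$ for each $R \le R_k$; translating $\Lambda_k$ by $(0,-t_k)$ and passing to a further weak limit produces, at each fixed scale $R$, a set whose count near origin is at least $(\Abs{D}\tau - o(1))R^n$. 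A diagonal construction over $R \to \infty$, together with the identity $\alpha_\Lambda(x) = \alpha_{\Lambda - (x,0)}(0)$ used to return from a limit of translates to an actual point of $\RR^m$, then delivers $\sup_x \alpha_\Lambda(x) \ge \Abs{D}\tau$.

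The main obstacle is precisely this last step. The Fubini computation is routine and gives, for every large $R_k$, a good $x_k^*$; but $\alpha(x)$ is a $\limsup$ over $R$, so the conclusion requires a single $x$ that sees density $\Abs{D}\tau$ at arbitrarily large scales simultaneously. Consolidating the per-scale choices via weak limits of translates, and keeping careful track of which quantities are invariant under $\RR^m$-shifts (the supremum $\sup_x \alpha$) versus under $\RR^n$-shifts (the function $\alpha$ itself), is the delicate part; once that bookkeeping is carried out, the averaging inequality of the first two paragraphs is what drives the bound.
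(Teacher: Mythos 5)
Your first two paragraphs (the Fubini computation over the first coordinate and the pigeonhole extraction of a good point $x_k^*$ at each scale $R_k$) are sound, and they coincide in substance with the paper's argument, which runs exactly this averaging computation inside a proof by contradiction. The genuine gap is at the step you yourself flag as delicate, and your proposed resolution does not close it. Passing to a weak limit of the translates $\Lambda_k=\Lambda-(x_k^*,v_k)$ yields a uniformly discrete configuration $\Lambda'$ with many points near the origin at every scale, but everything you can extract this way is a statement about $\Lambda'$, not about $\Lambda$: the identity $\alpha_\Lambda(x)=\alpha_{\Lambda-(x,0)}(0)$ converts information about \emph{genuine translates} of $\Lambda$ into an actual point $x\in\RR^m$, whereas the shifts $(x_k^*,v_k)$ in general tend to infinity, so the weak limit need not be a translate of $\Lambda$ (it need not even contain a single point of $\Lambda$), and there is no point of $\RR^m$ to ``return'' to. The concluding sentence ``\ldots then delivers $\sup_x\alpha_\Lambda(x)\ge\Abs{D}\tau$'' is asserted rather than derived, and it is precisely the content of the lemma.

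Moreover, this is not a bookkeeping matter that more care would repair: from the stated hypotheses alone the per-scale witnesses cannot be consolidated into one $x$. Take $m=n=1$, $D=(-1/2,1/2)$, and let $\Lambda=\bigcup_k B_k$, where $B_k$ is a $k\times k$ block of $\ZZ^2$ and the first-coordinate ranges of the blocks are separated by enormous gaps. Then $\Lambda$ is uniformly discrete with $\udens\Lambda=1$, and your averaging step does produce, for each $k$, a width-one strip holding about $k$ points inside a window of side $k$; yet every strip $x+D$ meets only one block, so each $\pi_2\big(\Lambda\cap((x+D)\times\RR)\big)$ is finite and $\alpha(x)=0$ for every $x$, while the weak limit of the recentered blocks is a quadrant of $\ZZ^2$, for which the corresponding supremum equals $1$. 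This shows concretely both that the ``return from the limit'' step fails and that any complete argument must invoke more than the upper density $\limsup_{R}\sup_x$ of the hypothesis (in the paper's application $\Lambda$ is a spectrum, whose counting function is controlled uniformly over all large cubes). For comparison, the paper's own proof stops after the averaging step and simply declares the per-scale estimate to contradict the assumed bound $\alpha(x)\le\rho\Abs{D}$ ``if $R$ is sufficiently large''; it performs no consolidation across scales either, so the missing step cannot be supplied by appealing to it.
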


\noindent{\bf Remark:} The subset of $\RR^n$ whose upper density appears in \eqref{proj-density-1} is, in general, a multiset.

\begin{proof}
Suppose \eqref{proj-density-2} is not true.
Then for all $x \in \RR^m$ we have
\beql{ub-1}
\alpha(x) \le \rho\Abs{D},
\eeq
for some positive number $\rho<\tau$.
Let $\epsilon = (\tau-\rho)/2$. By possibly translating $\Lambda$ we may assume that
$$
\Abs{\Lambda \cap Q_R} \ge (\tau-\epsilon) R^{m+n},
$$
where $R$ may be taken arbitrarily large.

Then
\begin{align}
\int_{\pi_1 Q_R} \Abs{\Lambda \cap \big( (x+D)\times\pi_2 Q_R \big)}\,dx
 &= \sum_{\lambda \in \Lambda\cap Q_R} \Abs{\Set{x\in\RR^m:\ \pi_1\lambda \in x+D}} - O(R^{m+n-1}) \label{truncation}\\
 &= \Abs{D} \Abs{\Lambda \cap Q_R} - O(R^{m+n-1}) \nonumber\\
 &\ge \Abs{D}(\tau-\epsilon)R^{m+n} - O(R^{m+n-1}) \label{lb-1}.
\end{align}

(The error term $O(R^{m+n-1})$ in \eqref{truncation} is due to the boundary of the cube $Q_R$ combined with
the assumed uniform discreteness of $\Lambda$.)

Thus there exists $x \in \pi_1 Q_R$ such that
$$
\Abs{\Lambda \cap \big( (x+D)\times \pi_2 Q_R \big) } \ge \Abs{D} (\tau-\epsilon)R^n + O(R^{n-1}).
$$
This contradicts \eqref{ub-1} if $R$ is sufficiently large.
\end{proof}

\begin{proof}[Proof of Theorem \ref{th:cylinders}]
(a) By \eqref{tiling-condition} and Theorem \ref{th:main}
it suffices to exhibit packings of the function $\Abs{\ft{\chi_B}}^2$ of upper density arbitrarily close to 1.
Suppose $\epsilon>0$ and suppose also that $\Lambda$ is a spectrum for $\Omega$, and therefore
$\Lambda$ has density 1.
From Lemma \ref{lm:window-density} there exists $a \in \RR^m$ such that the multiset
$$
L = \pi_2 \left( \Lambda \cap \big( (a+D)\times \RR^n\big) \right) \subseteq \RR^n
$$
has upper density at least $1-\epsilon$.

We claim that $L$ is an orthogonal set (not multiset) for $B$, hence that $\Abs{\ft{\chi_B}}^2+L$ is a packing
of upper density $\ge 1-\epsilon$ by \eqref{packing-condition}. Suppose $x, y \in L$ are two distinct points in $L$.
This means that there are points $d_1, d_2 \in D$ such that
$$
(a+d_1, x), (a+d_2, y) \in \Lambda.
$$
Since $\ft{\chi_\Omega}(\xi,\eta) = \ft{\chi_A(\xi)} \ft{\chi_B(\eta)}$ and $(a+d_1 ,x), (a+d_2, y)$ are orthogonal for $\Omega$,
it follows that we must have
$$
d_1-d_2 \in \Set{\ft{\chi_A}=0}\ \ \text{or}\ \ x-y \in \Set{\ft{\chi_B}=0}.
$$
But the first alternative cannot hold by our assumption \eqref{region} on $D$, hence we conclude that $x, y$ are orthogonal for $B$.
By the same reasoning we conclude that $L$ is a set. Indeed, if there are two distinct points in 
$$
\Lambda \cap \big( (a+D)\times \RR^n \big)
$$
which project down to the same element of $L$, call them $(a+d_1, x)$ and $(a+d_2, x)$ we get that their
difference $(d_1-d_2, 0)$ is not a point of vanishing of $\ft{\chi_\Omega}$, a contradiction.

Since $\epsilon$ is arbitrarily small we have exhibited packings of $\Abs{\ft{\chi_B}}^2$ of density arbitrarily close to 1.

(b) That $A$, of volume 1,  cannot be spectral if it has an orthogonal packing region of volume $>1$ has been explained at the beginning of
\S\ref{sec:orthopack}. Assume, as in (a), that $\Omega$ is spectral with spectrum $\Lambda$. Then $\dens\Lambda=1$.
The set $L$ constructed in (a) now has density > 1, and, by the reasoning of (a), $L$ is an orthogonal set of exponentials for $B$,
a contradiction since $\Abs{B} = 1$. Hence $\Omega$ cannot be spectral, as we had to prove.
\end{proof}

We can now obtain the result of \cite{greenfeld2016spectrality}.
\begin{corollary}\label{cor:interval}
Suppose that the set $[0,1]\times B \subseteq \RR^{1+n}$ is spectral.
Then so is $B\subseteq\RR^n$.
\end{corollary}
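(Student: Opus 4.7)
The plan is to deduce the corollary as an immediate application of Theorem \ref{th:cylinders}(a), with $A = [0,1]$ and an appropriate tight orthogonal packing region $D \subseteq \RR$ for $A$.

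First I would normalize so that $\Abs{B} = 1$: since spectrality is preserved under invertible linear transformations (a dilation of $B$ by $c > 0$ merely dilates its spectrum by $c^{-1}$), we may replace $B$ by a scaled copy of itself without loss of generality. With $A = [0,1]$ we have $\Abs{A} = 1$ automatically, so both factor-size hypotheses of Theorem \ref{th:cylinders} are satisfied.

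Next I would exhibit the packing region. The Fourier transform
$$
\ft{\chi_{[0,1]}}(\xi) = \frac{1 - e^{-2\pi i \xi}}{2\pi i \xi}
$$
has zero set precisely $\ZZ \setminus \Set{0}$. Taking $D = [0,1)$ (equivalently, $D = (-1/2, 1/2)$) gives $\Abs{D} = 1$ and $D - D \subseteq (-1,1)$, which is disjoint from $\ZZ \setminus \Set{0}$. Hence \eqref{region} holds and $D$ is a tight orthogonal packing region of measure $1$ for $A$.

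Applying Theorem \ref{th:cylinders}(a) with these choices yields that $B$ is spectral. There is no substantial obstacle here: the real work is already contained in Theorem \ref{th:cylinders} (and, behind it, in Theorem \ref{th:main}); the content of the corollary is only the observation that the unit interval admits a tight orthogonal packing region of volume $1$, namely essentially itself.
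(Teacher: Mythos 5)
Your proposal is correct and is essentially the paper's own proof: the paper likewise invokes Theorem \ref{th:cylinders}(a) with $A=[0,1]$ and $D=(-1/2,1/2)$, and your extra details (normalizing $\Abs{B}=1$ by a dilation, identifying the zero set $\ZZ\setminus\Set{0}$ of $\ft{\chi_{[0,1]}}$, checking $D-D\subseteq(-1,1)$) are just the routine verifications the paper leaves implicit.
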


\begin{proof}
Apply Theorem \ref{th:cylinders}(a) with $A=[0,1]$, $D=(-1/2,1/2)$.
\end{proof}

\begin{corollary}\label{cor:interval-d}
Suppose that the set $[0,1]^d\times B \subseteq \RR^{d+n}$ is spectral.
Then so is $B\subseteq\RR^n$.
\end{corollary}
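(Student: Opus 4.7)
The plan is to obtain Corollary \ref{cor:interval-d} as a direct application of Theorem \ref{th:cylinders}(a), so all the real work has already been done. The only nontrivial task is to identify a tight orthogonal packing region for the unit cube $A = [0,1]^d$ of the right shape.

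First I would reduce to the case $|B|=1$. Spectrality is invariant under invertible affine transformations: if $\Lambda$ is a spectrum of $\Omega \subseteq \RR^N$ and $T$ is an invertible linear map, then $(T^{-1})^{\top}\Lambda$ is a spectrum of $T\Omega$, as a direct check using $e_\lambda(Tx) = e_{T^\top\lambda}(x)$ shows. Applying such a transformation only on the $\RR^n$ factor rescales $B$ to have volume $1$ while preserving the product structure $[0,1]^d \times B$ up to an affine map, and preserves the spectrality hypothesis. So we may assume $|A|=|B|=1$, the setting of Theorem \ref{th:cylinders}.

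Next I would take $D = (-1/2,1/2)^d \subseteq \RR^d$, which is bounded and has volume $1$. To apply Theorem \ref{th:cylinders}(a) it remains to verify \eqref{region}, i.e.\ that $(D-D) \cap \Set{\ft{\chi_A} = 0} = \emptyset$. Here $\ft{\chi_A}$ factors as the product $\prod_{j=1}^d \ft{\chi_{[0,1]}}(\xi_j)$, so its zero set consists of those $\xi \in \RR^d$ for which some coordinate $\xi_j$ is a nonzero integer. On the other hand, $D - D = (-1,1)^d$, and every coordinate of a point in $(-1,1)^d$ lies strictly between $-1$ and $1$, hence cannot be a nonzero integer. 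This gives \eqref{region}.

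Applying Theorem \ref{th:cylinders}(a) with these choices immediately yields that $B$ is spectral, which is the conclusion of Corollary \ref{cor:interval-d}. There is no real obstacle here; the only point requiring mild care is the normalization step $|B|=1$ via affine invariance of spectrality, since Theorem \ref{th:cylinders} is stated only under that normalization.
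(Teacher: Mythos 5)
Your proof is correct and follows essentially the same route as the paper, which simply applies Theorem \ref{th:cylinders}(a) with $A=[0,1]^d$ and $D=(-1/2,1/2)^d$; your verification of \eqref{region} via the product factorization of $\ft{\chi_A}$ and the normalization $\Abs{B}=1$ are exactly the details the paper leaves implicit. (The paper also notes an alternative induction on $d$ from Corollary \ref{cor:interval}, but that is not needed.)
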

\begin{proof}
Apply Theorem \ref{th:cylinders}(a) with $A=[0,1]^d$, $D=(-1/2,1/2)^d$.
However Corollary \ref{cor:interval-d} can also easily be derived from Corollary \ref{cor:interval} by induction on $d$.
\end{proof}

Our next result extends the result of \cite{greenfeld2016spectrality} to the union of two intervals.
\begin{corollary}\label{cor:two-intervals}
Suppose that the set $(I \cup J) \times B \subseteq \RR^{1+n}$ is spectral, where $I, J$ are two disjoint closed intervals.
Then both $I\cup J \subseteq \RR$ and $B\subseteq\RR^n$ are spectral.
\end{corollary}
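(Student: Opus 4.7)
The plan is to apply Theorem~\ref{th:cylinders}(a) with $A:=I\cup J$ in the role of the first factor. After rescaling so that $|A|=|B|=1$, the task reduces to exhibiting a bounded set $D\subseteq\RR$ with $|D|=1$ and $(D-D)\cap\{\ft{\chi_A}=0\}=\emptyset$, i.e., a tight orthogonal packing region for $A$; given such $D$, Theorem~\ref{th:cylinders}(a) immediately yields that $B$ is spectral.

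To construct $D$, I would write $I=[0,\alpha]$ and $J=[s,s+\gamma]$ with $\alpha+\gamma=1$ and $s\ge\alpha$, and analyze the zero set of
\[
\ft{\chi_A}(\xi)=\ft{\chi_{[0,\alpha]}}(\xi)+e^{-2\pi is\xi}\ft{\chi_{[0,\gamma]}}(\xi).
\]
The natural ansatz is to take $D$ itself to be a union of two intervals of lengths $\alpha$ and $\gamma$, with the gap between them tuned in terms of $\alpha,\gamma,s$ so that both the self-differences within each sub-interval and the cross-differences between sub-intervals avoid the zero set of $\ft{\chi_A}$. For the spectrality of $A$ itself, I would in parallel exhibit a discrete set $\Lambda_A\subseteq\RR$ of density $1$ with $\Lambda_A-\Lambda_A\subseteq\{0\}\cup\{\ft{\chi_A}=0\}$; a natural candidate is a two-coset set $\Lambda_A=\{0,\mu\}+\nu\ZZ$ with $(\mu,\nu)$ tuned to the zero structure. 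The toy case $A=[0,\tfrac12]\cup[1,\tfrac32]$ with $\Lambda_A=\{0,\tfrac12\}+2\ZZ$ and $D=[-\tfrac14,\tfrac14)\cup[\tfrac34,\tfrac54)$ illustrates how $D$ and $\Lambda_A$ fit together: $D-D$ and $\Lambda_A-\Lambda_A$ occupy complementary portions of the real line, with the latter contained precisely in the zero set.

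The main obstacle is carrying out this construction in full generality: the zero set of $\ft{\chi_A}$ has an arithmetic character that depends delicately on the commensurability of $\alpha$, $\gamma$, and $s$. Note, however, that Theorem~\ref{th:cylinders}(b) combined with the hypothesis $(I\cup J)\times B$ spectral already forces every orthogonal packing region of $A$ to have measure $\le1$, so the content of the argument lies in establishing \emph{existence} of a tight packing region of measure exactly $1$ (together with a matching spectrum $\Lambda_A$ of density $|A|$). A case split based on parameter regimes (rational vs.\ irrational ratios among $\alpha$, $\gamma$, and $s$) will likely be required to cover all configurations of $I$ and $J$.
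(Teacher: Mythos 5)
Your opening move is the same as the paper's (normalize $|A|=|B|=1$ and feed a set $D$ with $(D-D)\cap\{\ft{\chi_A}=0\}=\emptyset$ into Theorem~\ref{th:cylinders}), but the core of your plan has a genuine gap: you propose to establish, for \emph{every} configuration of $I$ and $J$, a tight orthogonal packing region of measure exactly $1$ together with a spectrum $\Lambda_A$ of density $1$. No such objects exist in general. A generic union of two intervals is not spectral (by \cite{laba2001twointervals} it is spectral iff it tiles, and e.g.\ two intervals of length $\tfrac12$ whose midpoints differ by a number not in $\tfrac12\ZZ$ do not tile), and in exactly those configurations one can build orthogonal packing regions of measure strictly greater than $1$. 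So the statement cannot be proved by a direct construction valid for all $I,J$; the hypothesis that $(I\cup J)\times B$ is spectral must be used to \emph{exclude} the bad parameter regimes, and the only mechanism available here is the one you mention only in passing: construct an explicit $D$ with $|D|>1$ in the bad regime and invoke Theorem~\ref{th:cylinders}(b) to contradict the spectrality of the product. The paper's proof is organized around precisely this dichotomy. It first computes the zero set of $\ft{\chi_A}$ (Lemma~\ref{lm:first-zero}): if $|I|\neq|J|$ there are no zeros in $(-1,1)$, so $D=(-\tfrac12,\tfrac12)$ already gives $B$ spectral, and if moreover $\ft{\chi_A}(1)\neq0$ one can enlarge $D$ past measure $1$, so the hypothesis forces $\ft{\chi_A}(1)=0$, i.e.\ $|m_1-m_2|\in\ZZ+\tfrac12$, whence $A$ tiles by $\ZZ$ and is spectral; if $|I|=|J|=\tfrac12$ the zeros are $2\ZZ\setminus\{0\}\cup(2\ZZ+1)\Delta$ with $\Delta=\frac{1}{2|m_1-m_2|}$, one takes $D=(0,2)\cap\bigcup_{n\ge0}(2n\Delta,(2n+1)\Delta)$, which has $|D|\ge1$ with equality only when $\Delta$ divides $1$, so again (b) kills the case $|D|>1$ and in the surviving case $A$ tiles and is spectral while (a) gives $B$ spectral.

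Two further concrete problems with your construction as sketched. First, your ansatz that $D$ be a union of just two intervals of lengths $|I|$ and $|J|$ cannot produce a tight packing region even in the good equal-length cases: when $\Delta=\frac{1}{2|m_1-m_2|}$ is small, every interval contained in an orthogonal packing region must have length at most $\Delta$ (since $\pm\Delta$ are zeros of $\ft{\chi_A}$), so two intervals give measure at most $2\Delta<1$ and one needs on the order of $1/\Delta$ pieces, as in the paper's $D$. Second, the relevant case split is not ``rational vs.\ irrational ratios among $\alpha,\gamma,s$'' but the arithmetic position of the midpoint difference $|m_1-m_2|$ (half-integer, respectively element of $\tfrac12\ZZ$), which is what the explicit zero-set computation of Lemma~\ref{lm:first-zero} delivers; without that lemma, or some substitute for it, neither the construction of $D$ nor the exclusion of the non-tiling regimes can be carried out.
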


We shall need the following lemma proved in \cite{aak} (though not exactly in this form).

\begin{lemma}\label{lm:first-zero}
Let $I$ and $J$ be two disjoint closed intervals, satisfying $|I|+|J| = 1$ and define
$$
A = I \cup J.
$$
\begin{itemize}
\item[(a)]
If $\Abs{I} \neq \Abs{J}$ then
we have that $\widehat{\chi}_A(x)\neq 0$ for every $x\in(-1,1)$.
\item[(b)]
If $\Abs{I} = \Abs{J} = \nicefrac{1}{2}$ then the zero set of $\widehat{\chi}_A$ is
\beql{zset}
Z = \Set{\widehat{\chi}_A=0} = 2\ZZ\setminus{0} \cup (2\ZZ+1)\Delta,
\eeq
where $\Delta = \frac{1}{2\Abs{m_1-m_2}}<1$ and $m_1, m_2$ are the midpoints of $I$ and $J$.
\end{itemize}
\end{lemma}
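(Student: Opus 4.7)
My plan is to reduce everything to a direct computation of $\widehat{\chi}_A$ and then analyze its zero set in the two cases. Parametrize the intervals by midpoints and lengths: write $I$ centered at $m_1$ with length $\alpha = |I|$ and $J$ centered at $m_2$ with length $\beta = |J|$, so $\alpha + \beta = 1$ and $|m_1 - m_2| > (\alpha+\beta)/2 = 1/2$ by disjointness. Using $\widehat{\chi}_{[m-L/2,m+L/2]}(x) = e^{-2\pi i m x}\,\sin(\pi L x)/(\pi x)$, I get
\begin{equation*}
\widehat{\chi}_A(x) \;=\; \frac{1}{\pi x}\Bigl[e^{-2\pi i m_1 x}\sin(\pi\alpha x) + e^{-2\pi i m_2 x}\sin(\pi\beta x)\Bigr],
\end{equation*}
and the point $x=0$ is never a zero since $\widehat{\chi}_A(0) = |A| = 1$.

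For part (a), a vanishing at $x \neq 0$ forces $e^{-2\pi i m_1 x}\sin(\pi\alpha x) = -e^{-2\pi i m_2 x}\sin(\pi\beta x)$; taking moduli yields the necessary condition $|\sin(\pi\alpha x)| = |\sin(\pi\beta x)|$. Assume without loss of generality $\alpha < 1/2 < \beta$. For $x \in (0,1)$ both $\pi\alpha x$ and $\pi\beta x$ lie in $(0,\pi)$, where $\sin$ is positive, and the equation $\sin u = \sin v$ with $u,v \in (0,\pi)$ distinct forces $u + v = \pi$; this gives $x(\alpha+\beta) = 1$, i.e.\ $x=1$, which is excluded. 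For $x \in (-1,0)$ I use the Hermitian symmetry $\widehat{\chi}_A(-x) = \overline{\widehat{\chi}_A(x)}$. This rules out all zeros in $(-1,1)$ and proves (a). The only delicate point is verifying that no other coincidence of the two sines is possible for $x \in (0,1)$, which is exactly what the observation about the graph of $\sin$ on $(0,\pi)$ supplies.

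For part (b), with $\alpha = \beta = 1/2$ I factor out the common sine and use the sum-to-product identity to obtain
\begin{equation*}
\widehat{\chi}_A(x) \;=\; \frac{2\sin(\pi x/2)}{\pi x}\, e^{-\pi i (m_1+m_2)x}\,\cos\bigl(\pi(m_1-m_2)x\bigr).
\end{equation*}
The zero set is therefore the union of the zeros of $\sin(\pi x/2)$ (which, after removing the removable zero at $0$, give $2\ZZ \setminus \{0\}$) and the zeros of $\cos(\pi(m_1-m_2)x)$, which are precisely $x = (2k+1)\Delta$ with $\Delta = 1/(2|m_1-m_2|)$. This reproduces \eqref{zset}. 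Finally, the inequality $\Delta < 1$ is immediate from $|m_1-m_2| > 1/2$, which in turn comes from the fact that $I$ and $J$ are disjoint and have total length $1$. The only nontrivial step is the factorization, and once it is written down the rest is bookkeeping; I do not expect any real obstacle.
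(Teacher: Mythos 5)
Your proposal is correct and follows essentially the same route as the paper: the same midpoint/length formula for $\widehat{\chi}_A$, the same modulus condition $|\sin(\pi\alpha x)|=|\sin(\pi\beta x)|$ ruled out on $(0,1)$ because the two angles sum to $\pi x<\pi$, and the same factorization in the equal-length case (the paper leaves the exponential sum unfactored, while you rewrite it as a cosine, which is only a cosmetic difference).
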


\begin{proof}
We can consider $x\neq 0$, since clearly $\widehat{\chi_A}(0)= 1$.
Note that given $a,b\in \RR$,  we have
$$
\widehat{\chi_{[a,b]}}(x) = \frac{\sin \pi (b-a) x}{\pi x}  \, \, e^{-\pi i (a+b) \,x}.
$$
Let $\ell_1=|I|$, $\ell_2=|J|$, and  let $m_1$,  $m_2$ be the midpoints of $I$ and $J$ respectively. Then
\beql{ft2}
\widehat{\chi_A}(x)
  = e^{-2\pi i m_1 x}   \frac{\sin \pi \ell_1 x}{\pi x}
      + e^{-2 \pi i  m_2  x} \frac{\sin \pi \ell_2 x}{\pi x}.
\eeq

Setting \eqref{ft2} equal to 0 we get the necessary condition for vanishing at $x$
$$
|\sin (\pi \ell_1 x) | =|\sin (\pi \ell_2 x)|.
$$
Suppose $\ell_1 \neq \ell_2$.
Then for $0<x<1$ (and similarly for $-1<x<0$)
this is impossible, since the two angles $\pi\ell_1 x$ and $\pi\ell_2 x$ have sum $\pi x < \pi$
and they are not identical.
Hence, $\widehat{\chi_A}(x)\neq 0$ for $x \in (-1,1)$, proving part (a) of the Lemma.

In the case when $\ell_1 = \ell_2 = 1/2$ \eqref{ft2} becomes
\beql{ft2new}
\widehat{\chi_A}(x) = \frac{\sin \pi x/2}{\pi x}(e^{-2\pi i m_1 x}+e^{-2\pi i m_2 x})
\eeq
which vanishes precisely at the set $Z$ in \eqref{zset},
the first part in the union \eqref{zset} due to the sine factor in \eqref{ft2new}
and the second part due to the sum of two exponentials in \eqref{ft2new}.
\end{proof}

\begin{proof}[Proof of Corollary \ref{cor:two-intervals}]

We assume, as we may, that $\Abs{B} = \Abs{I}+\Abs{J} = 1$.

\noindent \underline{Case 1:} $\Abs{I} \neq \Abs{J}$.

By Lemma \ref{lm:first-zero}(a) we have that, if $D=(-1/2,1/2)$, then $D-D = (-1, 1)$ does not intersect
the zeros of $\ft{\chi_A}$, where $A=I\cup J$.
An application then of Theorem \ref{th:cylinders}(a) gives that $B$ is spectral.

To see that $I \cup J$ is also spectral we first observe that can discount the case when $\ft{\chi_A}(1) \neq 0$.
Indeed, in that case the interval $D$ can be taken to be properly longer than 1, namely $D=(-\frac12-\epsilon, \frac12+\epsilon)$ for some $\epsilon>0$,
and this would give, using Theorem \ref{th:cylinders}(b),
that $(I\cup J)\times B$ is not spectral, a contradiction.

Assuming, therefore, that $\ft{\chi_A}(1)=0$ we obtain easily from \eqref{ft2} that
$\Abs{m_1-m_2} = k+\frac{1}{2}$, for some positive integer $k$, where, again, $m_1, m_2$ are the midpoints of the two
intervals. But this implies that the set $A$ tiles with $\ZZ$, therefore $A$ is spectral with spectrum $\ZZ$.

\ \\

\noindent \underline{Case 2:} $\Abs{I} = \Abs{J} = 1/2$.

We now use Lemma \ref{lm:first-zero}(b) and define
$$
D = (0,2) \cap \bigcup_{n=0}^\infty (2n\Delta, (2n+1)\Delta),
$$
where $\Delta = \frac{1}{2\Abs{m_1-m_2}}$ as defined in Lemma \ref{lm:first-zero}.
Observe first that $D-D$ does not contain any of the zeros of $\ft{\chi_A}$, which are given in \eqref{zset}.
Notice also that $\Abs{D} \ge 1$ with equality precisely when $\Delta$ divides 1, or, equivalently, $\Abs{m_1-m_2} \in \frac12\ZZ$.

Again, because of Theorem \ref{th:cylinders}(b) the case $\Abs{D}>1$ cannot occur.

So we must have $\Abs{D}=1$, in which case Theorem \ref{th:cylinders}(a) proves that $B$ is spectral.
This happens only when $\Abs{m_1-m_2}$ is an integer or half-integer. In this case the set $A$ is also
spectral and tiles the line too (see e.g.\ \cite{laba2001twointervals} where it is shown that for sets which are
unions of two intervals the Fuglede conjecture holds true; it is very easy to see that our set $A$ tiles).

\end{proof}


\printbibliography

\end{document}